\documentclass[psamsfonts]{amsart}

\usepackage{amsmath, amsthm, amsfonts, amssymb, graphicx, url}

\makeatletter 
\newtheorem*{rep@theorem}{\rep@title}
\newcommand{\newreptheorem}[2]{%
\newenvironment{rep#1}[1]{%
 \def\rep@title{#2 \ref{##1}}%
 \begin{rep@theorem}}%
 {\end{rep@theorem}}}
\makeatother

\newtheorem{theorem}{Theorem}
\newtheorem{lemma}[theorem]{Lemma}
\newreptheorem{lemma}{Lemma}
\newreptheorem{theorem}{Theorem}

\newtheorem{cor}{Corollary}

\theoremstyle{definition}
\newtheorem{definition}{Definition}

\newtheorem{rem}{Remark}

\newcommand{\del}{\partial}
\newcommand{\abs}[1]{\left\lvert #1 \right\rvert}
\newcommand{\co}{\colon\thinspace}
\DeclareMathOperator{\Area}{Area}
\DeclareMathOperator{\const}{const}
\DeclareMathOperator{\Vol}{Vol}
\DeclareMathOperator{\dist}{dist}

\DeclareMathOperator{\Slab}{Slab}

\newcommand\Hy{\mathbb{H}}

\newcommand\Orb{\mathcal{O}}
\newcommand\isom{\operatorname{Isom}}

\newcommand\N{\mathbb{N}}
\newcommand\Z{\mathbb{Z}}
\newcommand\Q{\mathbb{Q}}
\newcommand\R{\mathbb{R}}


\begin{document}

\title[Thickness of orbifold skeletons]{Thickness of skeletons of \\arithmetic hyperbolic orbifolds}
\author{Hannah Alpert}
\address{University of British Columbia, 1984 Mathematics Road, Vancouver, BC, Canada}
\email{hcalpert@math.ubc.ca}
\author{Mikhail Belolipetsky}
\address{IMPA, Estrada Dona Castorina, 110, 22460-320 Rio de Janeiro, Brazil}
\email{mbel@impa.br}
\subjclass[2010]{53C23, 57R18}

\begin{abstract}
We show that closed arithmetic hyperbolic $3$--dimen\-sional orbifolds with larger and larger volumes give rise to triangulations of the underlying spaces whose $1$--skeletons are harder and harder to embed nicely in Euclidean space. To show this we generalize an inequality of Gromov and Guth to hyperbolic $n$--orbifolds and find nearly optimal geodesic triangulations of arithmetic hyperbolic $3$--orbifolds.

\end{abstract}

\maketitle

\section{Introduction}

Consider a closed hyperbolic $n$--dimensional manifold $X$ with $n \ge 3$. In \cite{Gromov12}, Gromov and Guth found a remarkable inequality which relates the hyperbolic volume of $X$, its isoperimetric Cheeger's constant, and retraction thickness of an embedding of $X$ into $\R^N$. Our first result is a generalization of this inequality to hyperbolic orbifolds, but with combinatorial thickness instead of retraction thickness. 

We define a closed \textit{\textbf{piecewise hyperbolic pseudomanifold}} to be a finite simplicial complex in which the top-dimensional simplices form a fundamental cycle under mod $2$ coefficients, and each simplex is isometric to a geodesic hyperbolic simplex. A closed hyperbolic orbifold $X$ endowed with a good triangulation is a piecewise hyperbolic pseudomanifold, where by a \textit{\textbf{good triangulation}} of $X$ we mean a triangulation of its underlying space such that all simplices are geodesic and for every dimension $\ell$, the $\ell$--stratum of the singular set of the orbifold is contained in the $\ell$--skeleton of the triangulation. 

Let $X^1$ denote the $1$--skeleton of a pseudomanifold $X$. Following Gromov and Guth in~\cite{Gromov12}, we say that an embedding of a graph $G$ into $\mathbb{R}^N$ has \textit{\textbf{combinatorial thickness}} at least $1$ if disjoint vertices and edges have disjoint $1$--neighborhoods; that is, every two distinct vertices have distance at least $2$, as do every two edges without a vertex in common and every edge and a vertex other than the edge's two endpoints.  Let $V_{1, N}(X^1)$ denote the infimum, over all embeddings of $X^1$ into $\mathbb{R}^N$ with combinatorial thickness at least $1$, of the $N$--dimensional volume of the $1$--neighborhood of the image of the embedding.  

We define the \textbf{\textit{Cheeger constant }}$h(X)$ of a closed $n$--orbifold $X$ as the greatest number such that for all open subsets $A \subseteq X$ with Hausdorff measurable boundary $\del A$, we have
\[h(X) \leq \frac{\Area \del A}{\min\{ \Vol A, \Vol X \setminus A\}}.\]

We can now state our first result. 

\begin{theorem}\label{thm-gg}
Let $X$ be a closed piecewise hyperbolic pseudomanifold of dimension $n \ge 3$, triangulated with vertex degree at most $D$.  Then for all $N \ge 3$, we have
\[V_{1, N}(X^1) \geq \const(n, N, D) \cdot \left(\frac{h(X)}{h(X) + 1} \cdot \Vol X\right)^{\frac{N}{N-1}},\]
where the constant is positive and $X^1$ denotes the $1$--skeleton of $X$.
\end{theorem}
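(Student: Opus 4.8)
The plan is to combine a Kolmogorov--Barzdin type separator estimate for combinatorially thick embeddings (cf.~\cite{Gromov12}) with the defining inequality of $h(X)$, passing through a region $A\subseteq X$ bounded by $(n-1)$--faces of the triangulation. Fix an embedding of $X^1$ into $\R^N$ of combinatorial thickness at least $1$, let $U$ be its $1$--neighborhood, and set $V=\Vol_N U$. Since distinct vertices are $2$--separated their unit balls lie disjointly in $U$, so $V\ge\omega_N\cdot\#\{\text{vertices}\}\ge\omega_N(n+1)>0$, where $\omega_N$ is the volume of the unit $N$--ball. It suffices to prove $V^{\frac{N-1}{N}}\ge c(n,N,D)\cdot\frac{h(X)}{h(X)+1}\Vol X$ for a positive constant, since raising to the power $\frac{N}{N-1}$ and taking the infimum over embeddings then yields the theorem.

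First I would produce a balanced separator of $X^1$. Weight each vertex $v$ by $w(v)=\frac{1}{n+1}\sum_{\sigma\ni v}\Vol\sigma$, the sum over top simplices at $v$, so $\sum_v w(v)=\Vol X$ and $w(v)\le 2^D v_n/(n+1)$, where $v_n<\infty$ is the maximal volume of a geodesic hyperbolic $n$--simplex and the degree bound $D$ allows at most $2^D$ top simplices at any vertex. By a Federer--Fleming/Guth sweepout of $U$ (in the manner of~\cite{Gromov12}), there is a nested family $W_t\subseteq U$, $t\in[0,V]$, with $\Vol_N W_t=t$ and $\mathcal H^{N-1}(\del W_t\cap U)\le c_1(N)V^{\frac{N-1}{N}}$. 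Set $S_t=\{v:\Vol_N(B(v,\tfrac12)\cap W_t)\ge\tfrac12\Vol_N B(v,\tfrac12)\}$; this increases from $\emptyset$ to the full vertex set, and after a generic perturbation of the sweepout vertices enter it one at a time, so $t\mapsto\sum_{v\in S_t}w(v)$ grows in jumps of size at most $2^D v_n/(n+1)$. Hence, provided $\Vol X\ge C_0(n,D)$, some $t^\ast$ gives $S:=S_{t^\ast}$ with $\sum_{v\in S}w(v),\ \sum_{v\notin S}w(v)\in[\tfrac14\Vol X,\tfrac34\Vol X]$. Finally, each edge of $\del_E S$, the set of edges of $X^1$ with exactly one endpoint in $S$, runs from a vertex inside $W_{t^\ast}$ to one outside, so its unit tube — lying in $U$ — meets $\del W_{t^\ast}$ in a set of $\mathcal H^{N-1}$--measure at least $\omega_{N-1}$; combinatorial thickness makes these tubes disjoint over any matching contained in $\del_E S$, and a maximal matching there has size at least $|\del_E S|/(2D-1)$, so $|\del_E S|\le c_2(N,D)V^{\frac{N-1}{N}}$.

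Next I would transfer this to $X$. Let $A$ be the interior of the union of the closed top simplices all of whose vertices lie in $S$, so $\Vol A=\sum_{\sigma\subseteq S}\Vol\sigma$ and $\Vol(X\setminus A)=\Vol X-\Vol A$. A top simplex with some but not all vertices in $S$ produces a cut edge, and at most $2^D$ such simplices produce any given cut edge, so the total volume of these straddling simplices is at most $2^D v_n|\del_E S|\le c_6 V^{\frac{N-1}{N}}$; with the balance of $S$ this gives $\min\{\Vol A,\Vol(X\setminus A)\}\ge\tfrac14\Vol X-c_6 V^{\frac{N-1}{N}}$. Likewise $\del A$ lies, up to an $\mathcal H^{n-1}$--null set, in the union of the $(n-1)$--faces shared by a top simplex inside $A$ and one outside; each such face yields a cut edge at each of its $n$ vertices and at most $2^D$ of them come from a given cut edge, so there are at most $2^D|\del_E S|$ of them, and — here $n\ge 3$ is used — each is a geodesic hyperbolic $(n-1)$--simplex of area at most $v_{n-1}<\infty$; hence $\Area\del A\le 2^D v_{n-1}|\del_E S|\le c_3(n,N,D)V^{\frac{N-1}{N}}$.

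Finally I would assemble the estimate. If $V^{\frac{N-1}{N}}\ge\tfrac{1}{8c_6}\Vol X$ then, since $\frac{h(X)}{h(X)+1}<1$, the target inequality already holds; if $\Vol X<C_0$ it follows from $V^{\frac{N-1}{N}}\ge(\omega_N(n+1))^{\frac{N-1}{N}}>0$ together with $\frac{h(X)}{h(X)+1}\Vol X<C_0$. Otherwise $\min\{\Vol A,\Vol(X\setminus A)\}\ge\tfrac18\Vol X>0$, so $A$ is admissible in the definition of $h(X)$ and $\tfrac18 h(X)\Vol X\le\Area\del A\le c_3 V^{\frac{N-1}{N}}$, whence $V^{\frac{N-1}{N}}\ge\tfrac{1}{8c_3}h(X)\Vol X\ge\tfrac{1}{8c_3}\cdot\frac{h(X)}{h(X)+1}\Vol X$, using $h\ge\frac{h}{h+1}$. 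Taking $c(n,N,D)=\min\{\tfrac{1}{8c_6},\,(\omega_N(n+1))^{\frac{N-1}{N}}/C_0,\,\tfrac{1}{8c_3}\}$ completes the reduction, hence the theorem. The step I expect to be most delicate is arranging the separator to be balanced with respect to the intrinsic volume $\Vol X$ rather than the ambient Euclidean volume of $U$ — this is what forces the half-balls $B(v,\tfrac12)$ and the bookkeeping of when vertices enter $S_t$ — while the passage from $\del_E S$ to $\del A$ is comparatively soft, using only the degree bound and the finiteness of the volume of a hyperbolic simplex.
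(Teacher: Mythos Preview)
Your argument is correct and takes a genuinely different route from the paper's.  Both proofs begin with Falconer's slicing theorem to find a direction in which every affine hyperplane meets the $1$--neighborhood $U$ in $(N-1)$--volume at most $\const(N)\,V^{\frac{N-1}{N}}$, but they then diverge.  The paper follows Gromov--Guth: it breaks $\R^N$ into unit slabs, extends the embedding to a map $X\to\R^N$, pushes the slab preimages into simplicial chains $\overline{X}_j$ with boundaries $\overline{Z}_j$, fills each $\overline{Z}_j$ using the Cheeger constant, and finishes by observing that $[X]=\sum_j[\overline{X}_j+\overline{Y}_j+\overline{Y}_{j+1}]$ forces one summand to carry all of $\Vol X$.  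You instead pick a \emph{single} balanced hyperplane: sweeping $a\mapsto U\cap\{x_N\le a\}$ and tracking which vertices have their half-balls mostly swallowed gives a vertex bipartition $S$ with both sides carrying at least $\tfrac14\Vol X$ in simplex-volume weight, and the matching-in-tubes argument bounds $|\partial_E S|$ by $\const(N,D)\,V^{\frac{N-1}{N}}$.  You then form $A$ as the subcomplex spanned by $S$ and apply the Cheeger inequality once.

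What each approach buys: yours is more elementary --- no extension of the map to all of $X$, no homotopy into the $(n-1)$--skeleton, no mod~$2$ degree argument --- and makes the role of the degree bound $D$ completely transparent.  The paper's homological decomposition, on the other hand, hews to the original Gromov--Guth template and does not require singling out a balanced level (the balance is replaced by the pigeonhole over $j$), which may make it easier to adapt to other settings where a pointwise-balanced separator is awkward to arrange.  One terminological quibble: what you call a ``Federer--Fleming/Guth sweepout'' is really just the linear sweepout $W_t=U\cap\{x_N\le a(t)\}$ after Falconer fixes the direction; with that choice the claim that vertices enter $S_t$ one at a time (for a generic direction, or after a tiny perturbation of the embedding) and that each cut edge contributes an $(N-1)$--disk of area $\omega_{N-1}$ to the slice are both immediate.
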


The inequality in Theorem~\ref{thm-gg} is sharp. To show this consider a sequence $X_k$ of congruence coverings of a closed arithmetic hyperbolic orbifold $Y$ endowed with the natural triangulations obtained by lifting a fixed triangulation of $Y$. The Cheeger constants $h(X_k)$ are bounded uniformly from below by $\const(n) > 0$ (see e.g. \cite[Appendix]{Gromov12}). Therefore, we have 
\[V_{1,N}(X_k^1) \geq \const(n, N, D, Y) \cdot (\Vol X_k)^{\frac{N}{N-1}}.\]  
On the other hand, the skeletons $X_k^1$ have bounded degree of the vertices and the number of vertices proportional to $\Vol X_k$. In \cite{Kolmogorov93}, Kolmogorov and Barzdin showed that every graph $G$ with vertex degree at most $D$ admits an embedding into $\R^N$ with
\[V_{1, N}(G) \leq \const(N, D) \cdot \abs{V(G)}^{\frac{N}{N-1}},\]
where $\abs{V(G)}$ denotes the number of vertices, and so for our congruence coverings this implies that the skeletons $X_k^1$ admit embedding into $\R^N$ with 
\[V_{1,N}(X_k^1) \leq \const(N, D, Y) \cdot (\Vol X_k)^{\frac{N}{N-1}}.\] 
(Strictly speaking, Kolmogorov and Barzdin considered only the case $N = 3$ but their proof generalizes immediately to embeddings of graphs in higher dimensional spaces.)  Let us note that sharpness of the Gromov--Guth inequality for retraction thickness is not known (see a related discussion after the statement of Theorem~3.2 in \cite{Gromov12}).

Our second main result implies that the inequality in Theorem~\ref{thm-gg} is nearly sharp for a much bigger class of spaces which include any sequence of congruence arithmetic $3$--orbifolds, not necessarily covering the same space or commensurable to each other. To this end we show that arithmetic orbifolds admit good triangulations with close to optimal number of simplices.  (If the number of simplices were optimal, meaning actually proportional to the hyperbolic volume, then instead of ``nearly sharp'', we would say that Theorem~\ref{thm-gg} is sharp for these sequences of arithmetic hyperbolic orbifolds, using Kolmogorov and Barzdin's theorem.)

\begin{theorem}\label{thm-triangulate}
For any $\delta > 0$ and dimension $n = 3$, there is a constant $V_0 = V_0(\delta, n)$ such that any closed arithmetic hyperbolic $n$--orbifold of volume $\Vol(\Orb) \geq V_0$ has a good triangulation with at most $\Vol(\Orb)^{1+\delta}$ simplices and vertex degree bounded above by a constant $D = D(n)$.
\end{theorem}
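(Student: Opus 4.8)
The plan is to realize the arithmetic orbifold $\Orb = \Hy^n/\Gamma$ as a quotient and build the triangulation by first triangulating a nice fundamental domain (or a thick-thin decomposition) equivariantly, then controlling the combinatorial size in terms of volume.

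First I would invoke the structure theory of arithmetic lattices to get geometric control on $\Gamma$. Concretely, by results on the geometry of arithmetic hyperbolic manifolds (margulis-type bounds, and effective versions of Borel's volume formula / Gelander-type theorems), one knows that $\Orb$ has injectivity radius bounded below away from the singular set by a constant depending only on $n$ in the thick part, and more importantly that the minimal number of balls of fixed radius $r = r(n)$ needed to cover $\Orb$ is at most $\const(n)\cdot \Vol(\Orb)$. The key input enabling the nearly-sharp (rather than merely polynomial) bound is an effective simplicial-volume-type or covolume-to-combinatorics estimate: arithmetic orbifolds of volume $V$ admit a description by $O(V^{1+\delta})$ data — this is where the $\delta$ loss comes from, and it traces back to the fact that the number of arithmetic orbifolds of bounded volume grows at most like $V^{cV}$ (Belolipetsky–Gelander–Lubotzky–Shalev) but, more usefully, that each individual such orbifold is assembled from building blocks (e.g. via a spine or a Morse-theoretic decomposition à la Gelander) whose count is $V^{1+\delta}$.

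The main steps, in order: (1) Fix $\delta>0$ and $n$; choose a radius $r = r(n)$ smaller than a Margulis constant for $\Hy^n$, and take a maximal $r$-separated set in $\Orb$, getting points $x_1,\dots,x_m$ with $m \le \const(n)\Vol(\Orb)$ and $2r$-balls covering $\Orb$. (2) Lift to $\Hy^n$ and build a $\Gamma$-equivariant triangulation of $\Hy^n$ adapted to this net — e.g. the nerve/Delaunay-type complex of the covering, which in each ball has uniformly bounded combinatorics since geodesic balls of radius $\le 2r$ in $\Hy^n$ have bounded geometry. Subdivide geodesically so that every simplex is a genuine geodesic hyperbolic simplex and so that the singular set of the orbifold (each $\ell$-stratum) lies in the $\ell$-skeleton; since the singular locus is itself a union of totally geodesic suborbifolds of lower volume, one can arrange the net and the triangulation to be compatible with it by including the relevant fixed-point sets of finite subgroups in the vertex/edge data and refining. (3) Pass to the quotient $\Orb$; the result is a good triangulation. (4) Bound the number of simplices: each net point contributes $O(1)$ simplices locally, giving $\const(n)\Vol(\Orb)$ in the thick part; in the thin part (cusps are absent since $\Orb$ is closed, but short geodesics and the singular tubes remain) one must subdivide finely enough to keep simplices geodesic and good, and here the $V^{1+\delta}$ slack absorbs the extra simplices needed — using that for $\Vol(\Orb) \ge V_0(\delta,n)$ large, even a generous per-block bound like $\const(n)$ times a slowly growing function stays below $\Vol(\Orb)^\delta$. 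Bounded vertex degree $D(n)$ follows because the local model near each vertex is a subdivided ball in $\Hy^n$ of bounded radius, which has bounded combinatorics independent of the orbifold.

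The hard part will be step (2)–(4) simultaneously: making the triangulation \emph{good} (geodesic simplices, singular strata in the right skeleta) while keeping it \emph{$\Gamma$-equivariant} and \emph{quantitatively small}. Equivariance forces the triangulation near a fixed point of a finite group $F < \Gamma$ to be $F$-invariant, which can blow up the local complexity by $|F|$; since $|F|$ can be large (on the order of a power of $\Vol$ in the worst cases), this is exactly the place where one cannot hope for a linear bound and the $\delta$-loss is genuinely needed. I expect the resolution to be: handle the thick part by the clean net/Delaunay construction with a linear simplex count and degree bound $D_0(n)$; handle neighborhoods of the singular set by a separate equivariant triangulation of the (finitely many types of) local orbifold charts, whose total size is controlled by $\sum |F_i|$ over a set of representatives, and then bound this sum by $\Vol(\Orb)^\delta$ using that the singular set has volume (as a lower-dimensional orbifold) at most $\const(n)$ and that deep tubes around it cannot accumulate too much torsion — an estimate that should follow from the arithmeticity via bounds on torsion in arithmetic groups. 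Stitching the two triangulations along a collar requires the usual bistellar/stellar subdivision bookkeeping, which is routine once the two pieces individually have bounded local geometry.
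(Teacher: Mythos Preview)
Your overall architecture (equivariant net $\to$ polyhedral decomposition $\to$ subdivision $\to$ quotient) is close to what the paper does, but there are two genuine gaps where you hand-wave exactly the points that carry the proof.

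\textbf{The arithmetic input for the thin part is missing.} You take a fixed radius $r=r(n)$ and push all difficulties into ``the thin part absorbs the slack''. But closed arithmetic hyperbolic orbifolds can have arbitrarily short closed geodesics, and a Margulis tube around a geodesic of length $\ell$ requires on the order of $1/\ell$ simplices to triangulate; without a lower bound on $\ell$ in terms of $\Vol(\Orb)$ you cannot bound this by $\Vol(\Orb)^\delta$. The paper does \emph{not} use a fixed radius: it chooses $\varepsilon$ proportional to $r_{inj}(\Orb)$, and then proves a quantitative lower bound on $r_{inj}(\Orb)$ using arithmeticity. The chain is: eigenvalues of a hyperbolic element give $\ell(\gamma)\ge \tfrac12\log M(P)$ for an integral monic polynomial $P$ of degree $\le (n+1)\deg(k)$; Dobrowolski's theorem bounds $\log M(P)$ below by $c(\log\log d/\log d)^3$; and $\deg(k)\le c\log\Vol(\Orb)+c'$ (Chinburg--Friedman in low dimension, Belolipetsky in higher). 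This yields $r_{inj}(\Orb)\ge c/(\log\log\Vol)^3$, hence $v_\varepsilon^{-1}\le (\log\Vol)^{3n}$. Similarly the bound $q\le c(\log\Vol)^{c'}$ on finite-subgroup orders comes from bounding torsion in $\mathrm{GL}(m,\mathcal{O}_k)$ in terms of $\deg(k)$. These polylog factors are what get absorbed into $\Vol^\delta$; the BGLS counting theorem you cite does not provide them.

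\textbf{Your mechanism for making the triangulation ``good'' is not the paper's, and yours is where the argument would actually break.} You propose to triangulate the thick part and the singular neighborhoods separately and stitch along collars. The paper avoids this entirely with a single clean construction: take the $\Gamma$-equivariant Voronoi decomposition of $\Hy^n$ with respect to the lifted net (chosen off the singular set), and pass to its \emph{barycentric subdivision}, defining the barycenter of each face as the unique minimizer of summed squared distances to its vertices. The key observation is that in this subdivision the $d+1$ vertices of any $d$-simplex are barycenters of faces of $d+1$ different dimensions, so any isometry fixing an interior point must fix all the vertices and hence the whole simplex; thus the $\ell$-stratum of the singular set automatically lies in the $\ell$-skeleton. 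No stitching, no separate local models, and the degree bound is uniform because it reduces to a packing count at scale $\varepsilon\le\mu_n/2$.
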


The proof of Theorem~\ref{thm-triangulate} uses some deep results about volumes of arithmetic orbifolds and their relation to Lehmer's problem in number theory, to bound the injectivity radius. This approach to triangulations of hyperbolic $3$--orbifolds was first suggested in~\cite{Bel17}. A good triangulation is then obtained as a barycentric subdivision of a certain equivariant Voronoi complex in the hyperbolic $3$--space. These triangulations may have independent interest.  The proof relies on Lemma~\ref{sublem-platonic} which uses the classification of finite subgroups of $SO(3)$, but it seems plausible that both Lemma~\ref{sublem-platonic} and Theorem~\ref{thm-triangulate} are true for all $n \geq 3$.

\medskip

Combining together Theorems \ref{thm-gg} and \ref{thm-triangulate} we obtain a corollary stated as follows. 

\begin{cor}\label{thm-main}
Let $\{X_k\}_{k = 1}^\infty$ be a sequence of closed arithmetic pairwise non-isometric hyperbolic orbifolds of dimension $n = 3$ such that the Cheeger constants $h(X_k)$ are uniformly bounded below by $C > 0$.

Then for any fixed dimension $N \ge 3$ and any $\delta >0$, there exist triangulations of the underlying spaces of the orbifolds $X_k$ such that 
\begin{itemize}
 \item[(a)] the $1$--skeletons $X_k^1$ have at most $\const(n, \delta) \cdot (\Vol X_k)^{1+\delta}$ vertices; 
 \item[(b)] $X_k^1$ have a uniform bound on the number of edges at each vertex; and 
 \item[(c)] we have 
 $$\frac{V_{1, N}(X_k^1)}{\#\mathrm{vertices}(X_k^1)} \rightarrow \infty.$$
\end{itemize}
\end{cor}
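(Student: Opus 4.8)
The plan is to simply combine the two main theorems, so the real content is bookkeeping about which quantity dominates. First I would invoke Theorem~\ref{thm-triangulate}: for each fixed $\delta > 0$ and dimension $n$, every $X_k$ with $\Vol X_k \geq V_0(\delta, n)$ admits a good triangulation with at most $(\Vol X_k)^{1+\delta}$ simplices and vertex degree bounded by $D = D(n)$. Since the $X_k$ are pairwise non-isometric closed arithmetic hyperbolic orbifolds, their volumes form an infinite set of values in a discrete set bounded below (by the Kazhdan--Margulis theorem, or by the explicit lower bounds for arithmetic volumes), hence $\Vol X_k \to \infty$; so discarding finitely many $k$ we may assume $\Vol X_k \geq V_0$ for all $k$. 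The number of vertices of $X_k^1$ is at most the total number of simplices, giving part~(a) with $\const(n,\delta) = 1$ (or we absorb the finitely many discarded terms into the constant), and the vertex-degree bound $D(n)$ gives part~(b).

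Next I would feed these triangulations into Theorem~\ref{thm-gg}. Each $X_k$ with its good triangulation is a closed piecewise hyperbolic pseudomanifold of dimension $n \geq 3$ with vertex degree at most $D = D(n)$, so for the fixed $N \geq 3$ we get
\[
V_{1,N}(X_k^1) \;\geq\; \const(n, N, D) \cdot \left(\frac{h(X_k)}{h(X_k)+1}\cdot \Vol X_k\right)^{\frac{N}{N-1}}.
\]
Using the uniform lower bound $h(X_k) \geq C > 0$, the function $t \mapsto t/(t+1)$ is increasing, so $h(X_k)/(h(X_k)+1) \geq C/(C+1)$, and therefore
\[
V_{1,N}(X_k^1) \;\geq\; \const(n,N,D)\cdot\left(\tfrac{C}{C+1}\right)^{\frac{N}{N-1}}\cdot (\Vol X_k)^{\frac{N}{N-1}} \;=\; \const(n, N, C)\cdot (\Vol X_k)^{\frac{N}{N-1}},
\]
with a positive constant since $D$ depends only on $n$.

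Finally I would divide by the number of vertices. By part~(a), $\#\mathrm{vertices}(X_k^1) \leq \const(n,\delta)\cdot(\Vol X_k)^{1+\delta}$, so
\[
\frac{V_{1,N}(X_k^1)}{\#\mathrm{vertices}(X_k^1)} \;\geq\; \const(n, N, C, \delta)\cdot (\Vol X_k)^{\frac{N}{N-1} - (1+\delta)}.
\]
Since $N \geq 3$ we have $\frac{N}{N-1} > 1$, so choosing $\delta$ small enough that $\delta < \frac{N}{N-1} - 1 = \frac{1}{N-1}$ makes the exponent $\frac{N}{N-1} - 1 - \delta$ strictly positive; combined with $\Vol X_k \to \infty$ this gives part~(c). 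The one subtlety worth flagging is that $\delta$ in the statement of the corollary is quantified before $N$, yet the argument needs $\delta < 1/(N-1)$; this is harmless because if the given $\delta$ is too large we may run the whole argument with a smaller $\delta' < \min\{\delta, 1/(N-1)\}$, and a triangulation with at most $(\Vol X_k)^{1+\delta'}$ vertices certainly has at most $(\Vol X_k)^{1+\delta}$ vertices, so parts~(a) and~(b) are only strengthened while part~(c) is achieved with $\delta'$. The main (and only) obstacle is thus purely notational — tracking the order of quantifiers and confirming that $\Vol X_k \to \infty$ for pairwise non-isometric arithmetic orbifolds — rather than any genuine mathematical difficulty, since all the hard work is already packaged in Theorems~\ref{thm-gg} and~\ref{thm-triangulate}.
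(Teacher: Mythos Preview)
Your proposal is correct and follows essentially the same route as the paper: invoke Theorem~\ref{thm-triangulate} for the triangulations, feed them into Theorem~\ref{thm-gg}, divide, and use $\Vol X_k \to \infty$ together with $\delta < \tfrac{1}{N-1}$; you even handle the quantifier order on $\delta$ more carefully than the paper does. One small correction: your justification that $\Vol X_k \to \infty$ is not quite right---pairwise non-isometric orbifolds can share a volume, and Kazhdan--Margulis gives only a uniform lower bound, not discreteness or finiteness---so the paper instead invokes the Borel--Prasad finiteness theorem, which guarantees that only finitely many arithmetic hyperbolic orbifolds have volume below any given bound.
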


\begin{proof}[Proof of Corollary~\ref{thm-main}]
First note that the assumption that the orbifolds $X_k$ are pairwise non-isometric implies that $\Vol X_k \rightarrow \infty$. This follows from the Borel--Prasad finiteness theorem applied to the arithmetic groups of isometries of the hyperbolic spaces \cite{Borel-Prasad}.

Given $N$, we choose $\delta < \frac{1}{N-1}$ and apply Theorem~\ref{thm-triangulate} to triangulate each sufficiently large $X_k$ with vertex degree at most $D$ and with 
\[\#\mathrm{simplices}(X_k) \leq (\Vol X_k)^{1+\delta}.\]
We have
\[\#\mathrm{vertices}(X_k) \leq (n+1) \cdot \#\mathrm{simplices}(X_k),\]
and thus
\[1 \geq \frac{1}{n+1} \cdot \#\mathrm{vertices}(X_k) \cdot (\Vol X_k)^{-(1+\delta)}.\]
The result is a closed piecewise hyperbolic pseudomanifold, so we may apply Theorem~\ref{thm-gg} to get
\begin{align*}
&V_{1, N}(X_k^1) \geq \const(n, N, D) \cdot \left(\frac{h(X_k)}{h(X_k) + 1} \cdot \Vol X_k\right)^{\frac{N}{N-1}} \geq \\
&\phantom{V_{1, N}(X_k^1)} \begin{aligned}\geq \const(n, N, D) & \cdot  \left(\frac{C}{C+1}\right)^{\frac{N}{N-1}} \cdot \\
& \cdot \frac{1}{n+1} \cdot \#\mathrm{vertices}(X_k) \cdot (\Vol X_k)^{\frac{N}{N-1} - (1+\delta)},
\end{aligned}
\end{align*}
and thus
\[\frac{V_{1, N}(X_k^1)}{\#\mathrm{vertices}(X_k)} \rightarrow \infty.\]
\end{proof}

This result is most relevant when the orbifolds $X_k$ all have the same underlying space, such as, for example, the sphere $S^3$.  In that case it does not appear to be known whether there exist piecewise hyperbolic pseudomanifolds whose $1$--skeletons satisfy the conclusion of the corollary.  If no such family of triangulations exists, it would imply that certain families of orbifolds, known to have Cheeger constant bounded below and to have the underlying space with bounded topology, must have only finitely many elements.  

We can draw a connection between this problem and the expander graphs. It was shown by Kolmogorov and Barzdin in the 1960's that expander graphs are hard to embed in $\R^N$ \cite{Kolmogorov93}. Hence properties (b) and (c) of Corollary~\ref{thm-main} would be satisfied if we have a sequence of triangulations of, say, $S^3$ whose $1$--skeletons form a family of expanders. Existence of such triangulations is a well known problem which has attracted considerable attention throughout the years. We can refer to Kalai's chapter~19 of the Handbook of Discrete and Computational Geometry for a related discussion \cite{Kalai}. More recently, Lackenby and Souto came up with a nice construction of such triangulations \cite{Lackenby}. Their simplicial complexes can be turned into hyperbolic orbifolds; however, these orbifolds will be non-arithmetic and, what is more essential, we expect that the number of simplices in the Lackenby--Souto triangulations would grow much faster than the volumes of the associated orbifolds. So these expander skeletons are far from optimal from our viewpoint: for them the inequality in Theorem~\ref{thm-gg} is far from sharp. The existence of triangulations of a sphere or other topological manifold whose skeletons form a sequence of \emph{geometric expanders} by satisfying the properties (a)--(c) of Corollary~\ref{thm-main} remains unknown. 

In Section~\ref{section-gg} we prove Theorem~\ref{thm-gg}, and in Section~\ref{section-triangulate} we prove Theorem~\ref{thm-triangulate}.

\medskip

\emph{Acknowledgments.}  We thank Larry Guth for bringing us together and for pointing out a mistake in an earlier version of Theorem~\ref{thm-gg}. 
We thank Marc Lackenby for explaining to us his work with Juan Souto.
We thank the referee for carefully reading the manuscript and helpful comments. 
H.~Alpert is supported by the National Science Foundation under Award No.~DMS 1802914, and M.~Belolipetsky is partially supported by CNPq, FAPERJ and Math-AmSud grants and by the MPIM in Bonn.

\section{Slicing piecewise hyperbolic manifolds}\label{section-gg}

In this section we prove Theorem~\ref{thm-gg}, based on the proof of Theorem~3.2 of~\cite{Gromov12}.  For convenience we restate it below.

\begin{reptheorem}{thm-gg}
Let $X$ be a closed piecewise hyperbolic pseudomanifold of dimension $n \ge 3$, triangulated with vertex degree at most $D$.  Then for all $N \ge 3$, we have
\[V_{1, N}(X^1) \geq \const(n, N, D) \cdot \left(\frac{h(X)}{h(X) + 1} \cdot \Vol X\right)^{\frac{N}{N-1}},\]
where the constant is positive and $X^1$ denotes the $1$--skeleton of $X$.
\end{reptheorem}

\begin{proof}
Let $i \co X^1 \hookrightarrow \mathbb{R}^N$ be an embedding of the $1$--skeleton into $\mathbb{R}^N$.  Let $N_1(X^1)$ denote the $1$--neighborhood of the image, and let $V_1(X^1)$ denote its volume.  The Falconer slicing inequality (from~\cite{Falconer80}, recalled in~\cite{Gromov12}) guarantees that we can rotate the coordinates of $\mathbb{R}^N$ to get the $x_N$ coordinate pointing in a good direction so that for every $t \in \mathbb{R}$, the $(n-1)$--dimensional volume of the slice $N_1(X^1) \cap \{x_N = t\}$ is at most $\const(N) \cdot V_1(X^1)^{\frac{N-1}{N}}$.

We view $\mathbb{R}^N$ as broken into slabs,
\[\Slab(j) = \{j \leq x_N \leq j+1\}.\]
For each $\Slab(j)$, we let $S_j$ be the subcomplex of $X$ consisting of all simplices that have a $1$--dimensional edge that intersects $\Slab(j)$.  We claim that the number of top-dimensional simplices in $S_j$ is at most $\const(n, N, D) \cdot V_1(X^1)^{\frac{N-1}{N}}$.  To show this, suppose that there are $M$ top-dimensional simplices in $S_j$.  Select one edge of each of these simplices that intersects $\Slab(j)$.  Because each edge is in at most $\binom{D-1}{n-1} = \const(n, D)$ top-dimensional simplices, after removing duplicates we have at least $\const(n, D)^{-1} \cdot M$ edges through $\Slab(j)$.  Because each edge is incident to at most $2D - 2$ other edges, we may greedily choose a subset of disjoint edges containing at least $(2D - 1)^{-1}\cdot \const(n, D)^{-1} \cdot M = \const(n, D) \cdot M$ of the original edges.  On each edge in this matching, we select a point in $\Slab(j)$; the $1$--balls around these points are disjoint and are contained in the union of $\Slab(j-1)$, $\Slab(j)$, and $\Slab(j+1)$.  From the Falconer slicing inequality, we may assume that each slab has volume at most $\const(N) \cdot V_1(X^1)^{\frac{N-1}{N}}$, so because the balls are contained in three slabs, we have
\[\Vol(\mathrm{balls}) \leq \const(N) \cdot V_1(X^1)^{\frac{N-1}{N}} \cdot 3,\]
and thus
\[M \leq \const(n, N, D) \cdot V_1(X^1)^{\frac{N-1}{N}}.\]

Next, we extend the embedding $i$ to a map
\[i \co X \rightarrow \mathbb{R}^N\]
that is smooth on each simplex---it doesn't matter whether it is an embedding---such that the image of a given simplex intersects $\Slab(j)$ only if it is in $S_j$.  We also assume that every integer $j$ is a regular value of the restriction of $x_N \circ i$ to every open simplex; by Sard's theorem this can be achieved by slightly perturbing the slab boundaries for every $j$.

The remainder of the proof is very much like the proof of Theorem~3.2 in~\cite{Gromov12}.  We let $X_j$ be the preimage $i^{-1}\Slab(j)$ in $X$, and view it as a chain in homology with coefficients in $\mathbb{Z}_2$, so that
\[[X] = \left [ \sum_{j} X_j \right].\]
We let $Z_j$ be the preimage $i^{-1}\{x_N = j\}$ in $X$, so that
\[\del X_j = Z_j + Z_{j+1}.\]
We homotope the identity map on $X$ to a map that sends each $Z_j$ to the $(n-1)$--skeleton of $X$, with the property that the image of each $X_j$ remains in $S_j$.  We can find this homotopy by choosing in each top-dimensional simplex a small ball not in any $Z_j$, and stretching that ball to cover the simplex so that the rest of the simplex maps to the boundary of the simplex.

Let $X'_j$ be the image of each $X_j$ under this homotopy.  Taking the degree mod $2$ of $X'_j$ with respect to each top-dimensional simplex, we can replace $X'_j$ by a simplicial chain $\overline{X}_j$, so that the fundamental class $[X]$ is the sum
\[[X] = \left[ \sum_j \overline{X}_j \right].\]
Similarly, we define $Z'_j$ and $\overline{Z}_j$ so that
\[\del \overline{X}_j = \overline{Z}_j + \overline{Z}_{j+1}.\]
Because each $S_j$ has at most $\const(n, N, D) \cdot V_1(X^1)^{\frac{N-1}{N}}$ top-dimensional simplices, each $\overline{X}_j$ and each $\overline{Z}_j$ has at most $\const(n, N, D) \cdot V_1(X^1)^{\frac{N-1}{N}}$ simplices also.

Notice that each $\overline{Z}_j$ is null-homologous because it is the boundary of $\sum_{i < j} \overline{X}_j$.  Thus, the definition of the Cheeger constant $h(X)$ implies that for every $\overline{Z}_j$ we can find a chain $\overline{Y}_j$ with $\del \overline{Y}_j = \overline{Z}_j$ that satisfies 
\[\Vol \overline{Y}_j \leq h(X)^{-1} \cdot \Area \overline{Z}_j.\]

In the sum
\[\sum_j (\overline{X}_j + \overline{Y}_j + \overline{Y}_{j+1}),\]
each $\overline{Y}_j$ is counted twice and cancels, so we can write the fundamental class $[X]$ as the sum of cycles
\[[X] = \sum_j [\overline{X}_j + \overline{Y}_j + \overline{Y}_{j+1}].\]
Thus, not every $\overline{X}_j + \overline{Y}_j + \overline{Y}_{j+1}$ can be null-homologous, and so at least one of them must be homologous to $[X]$ and must have total volume at least $\Vol X$.  Thus, using the fact that geodesic hyperbolic simplices have volume bounded above, for this $j$ we have
\begin{equation*}
\begin{split}
\Vol X & \leq \Vol \overline{X}_j + \Vol \overline{Y}_j + \Vol \overline{Y}_{j+1} \leq\\
& \leq \const(n, N, D) \cdot V_1(X^1)^{\frac{N-1}{N}} + 2\cdot h(X)^{-1} \cdot \const(n, N, D) \cdot V_1(X^1)^{\frac{N-1}{N}} \leq\\
& \leq \const(n, N, D) \cdot (1 + h(X)^{-1}) \cdot V_1(X^1)^{\frac{N-1}{N}}.
\end{split}
\end{equation*}
\end{proof}

When a pseudomanifold $X$ is a closed hyperbolic $n$--manifold both our Theorem~\ref{thm-gg} and the Gromov--Guth Theorem~3.2 can be applied to it, and it would be interesting to compare the results. This leads to a question about the relation between combinatorial thickness and the retraction thickness from \cite{Gromov12}. We recall the definitions:

\begin{definition}
A manifold $X$ embedded in $\R^N$ is said to have \textit{\textbf{retraction thickness}} at least $T$ if the $T$--neighborhood of $X$ retracts to $X$. 
\end{definition}

\begin{definition}
A pseudomanifold $X$ whose $1$--skeleton is embedded in $\R^N$ with combinatorial thickness $T$ is said to have \textit{\textbf{thickness}} at least $T$.
\end{definition}

Given a subset $Y \subset \R^N$, denote by $V_{T}(Y)$ the $N$--dimensional volume of its $T$--neighborhood. Now assume that a closed hyperbolic manifold of dimension $n \ge 3$ has an embedding $i \co X \hookrightarrow \R^N$ with retraction thickness $T$. One can then try to construct a triangulation of the image $i(X)$ whose $1$--skeleton has a combinatorial thickness $T$ (or at least $T-\varepsilon$ for an arbitrary small $\varepsilon > 0$). If there is such a triangulation, then we can apply the simplex straightening to its simplices and obtain a piecewise hyperbolic pseudomanifold isometric to $X$ such that $V_{T-\varepsilon}(X^1) \le V_{T}(i(X))$. It would then allow us to deduce Theorem~3.2 from \cite{Gromov12} from our Theorem~\ref{thm-gg}.  

Reciprocally, suppose that we have an embedding $\iota \co X^1 \hookrightarrow \R^N$ with combinatorial thickness $T$. Assuming that the codimension is large compared to $n$, we can extend it to an embedding $\tilde{\iota} \co X \hookrightarrow \R^N$. Can this embedding have retraction thickness $T'$ close to $T$ and the volume $V_{T'}(\tilde{\iota}(X))$ bounded in terms of $V_{T}(\iota(X^1))$? If yes, this would have implications for sharpness of the inequality from Theorem~3.2 in \cite{Gromov12}. 

We leave these questions for future research.

\section{Triangulating arithmetic hyperbolic orbifolds}\label{section-triangulate}

In this section we prove Theorem~\ref{thm-triangulate} restated below. 

\begin{reptheorem}{thm-triangulate}
For any $\delta > 0$ and dimension $n = 3$, there is a constant $V_0 = V_0(\delta, n)$ such that any closed arithmetic hyperbolic $n$--orbifold of volume $\Vol(\Orb) \geq V_0$ has a good triangulation with at most $\Vol(\Orb)^{1+\delta}$ simplices and vertex degree bounded above by a constant $D = D(n)$.
\end{reptheorem}



Arithmeticity of the orbifolds is essential for Theorem~\ref{thm-triangulate}. We begin with recalling the definition of arithmetic subgroups.  Let $H$ be a linear semisimple Lie group with trivial center and let $\mathrm{G}$ be an algebraic group defined over a number field $k$ such that $\mathrm{G}(k \otimes_\Q \R)$ is isogenous to $H\times K$, where $K$ is a compact Lie group. Consider a natural projection $\phi \co \mathrm{G}(k \otimes_\Q \R) \to H$. The image of the group of $k$--integral points $\phi(\mathrm{G}(\mathcal{O}_k))$ and all subgroups of $H$ which are commensurable with it are called \textit{\textbf{arithmetic subgroups of $H$ defined over $k$}}. Arithmetic subgroups are lattices, i.e., they are discrete and have finite covolume in $H$. Their quotient spaces are called \textit{\textbf{arithmetic orbifolds}}. In our case, $H = \mathrm{PO}(n,1)$ is the group of isometries of the hyperbolic space $\Hy^n$ and the quotient orbifolds are hyperbolic $n$--orbifolds. We refer to \cite{WitteMorris} for a comprehensive introduction to the theory of arithmetic subgroups.  

Let $\Orb =  \Hy^n/\Gamma$ be a closed hyperbolic orbifold with singular set $\Sigma$, and let $\pi \co \Hy^n \to \Orb$ be the covering map. The elements of the group $\Gamma$ fall into two types: \textit{\textbf{elliptic}} are those which have fixed points in $\Hy^n$ and \textit{\textbf{hyperbolic}} are those which act freely. For a hyperbolic isometry $\gamma \in \Gamma$ its \textit{\textbf{displacement at $x \in \Hy^n$}} is defined by $\ell(\gamma, x) = \dist(x, \gamma x)$ and the \textit{\textbf{displacement}} of $\gamma$ (also called its \textit{translation length}) is 
\[\ell(\gamma) = \inf_{x \in \Hy^n} \ell(\gamma, x).\]
It is equal to the displacement of $\gamma$ at the points of its axis. We will define the orbifold \textit{\textbf{injectivity radius}} by $r_{inj}(\Orb) = \inf \{\frac12 \ell(\gamma)\}$, where the infimum is taken over all hyperbolic elements $\gamma \in \Gamma$. It is equal to half of the smallest length of a closed geodesic in $\Orb$. When $\Orb$ is a manifold, this definition is equivalent to the usual definition of the injectivity radius as the supremum of $r$ such that any point $p\in\Orb$ admits an embedded ball $B(p, r) \subset \Orb$. This is not the case in general; the points in the singular set only admit embedded folded balls (see \cite{Samet13} for the definition of folded balls).

We will first assume that $r_{inj}(\Orb) \ge r > 0$ and that any finite subgroup $F < \Gamma$ has order $|F| \le q$. A similar problem was considered before by Gelander and Samet (see \cite[Section~2]{BGLS10} and \cite{Samet13}). The difference in our case is that we require an explicit control over the constants and that we want to construct a good triangulation of $\Orb$, not just a simplicial complex homotopy equivalent to it. 

By the Margulis lemma there exist constants $\mu_n > 0$ and $m_n \in \N$ depending only on the dimension $n$, such that any subgroup of $\Gamma$ generated by the elements whose displacements at some point $x$ are bounded above by $\mu_n$ contains a normal nilpotent subgroup of index at most $m_n$. We refer to \cite[Theorem~8.3]{Gromov-lect} for the general statement and the proof of the lemma. We will use this result to obtain certain constraints on the position of the singular set in $\Orb$. 

\begin{lemma}\label{lemma-Voronoi}
For $n = 3$ let $\Orb = \mathbb{H}^n/\Gamma$ be a closed hyperbolic orbifold, and let $\varepsilon = \min \{\frac{\mu_n}{8}, \frac{r}{16m_n}\}$, where $\mu_n$ and $m_n$ are dimensional constants arising from the Margulis lemma, and $r \leq r_{inj}(\Orb)$.  Then there is a good triangulation $T$ of $\Orb$ such that the vertex degree is bounded by $D(n)$ and the number of simplices is bounded by $C(n)\frac{q\Vol(\Orb)}{v_\varepsilon}$, where $C(n)$ and $D(n)$ are dimensional constants, $q$ is the maximum size of a finite subgroup $F < \Gamma$, and $v_{\varepsilon}$ denotes the volume of a ball of radius $\varepsilon$ in $\mathbb{H}^n$.
\end{lemma}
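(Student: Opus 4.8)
The plan is to build an equivariant Voronoi decomposition of $\Hy^n$ using a carefully chosen $\Gamma$-invariant set of centers, descend it to $\Orb$, and then take the barycentric subdivision to obtain a genuine triangulation. First I would fix a maximal set $P = \{p_i\}$ of points in $\Hy^n$ that is $\Gamma$-invariant and $2\varepsilon$-separated (no two distinct points within distance $2\varepsilon$), obtained by starting from a maximal $2\varepsilon$-separated set and taking the $\Gamma$-orbit; maximality can be arranged since $\varepsilon$ is small compared to $r_{inj}(\Orb)$, so that within the ball where collisions could occur the group acts nicely. Because $r \le r_{inj}(\Orb)$ and $\varepsilon \le r/(2m_n)$, any Margulis-type subgroup generated by short translations is trivial, so the only obstruction to separation comes from finite (elliptic) subgroups, whose fixed-point sets are totally geodesic; one chooses $P$ to contain the relevant fixed points (or to be disjoint from small neighborhoods of them in a controlled way) so that the $\ell$-strata of $\Sigma$ end up in the $\ell$-skeleton after subdivision. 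The Voronoi cell $V(p_i) = \{x : \dist(x,p_i) \le \dist(x,p_j)\ \forall j\}$ then has diameter bounded by a dimensional constant (since $P$ is $\varepsilon$-dense, each cell is contained in a ball of radius $\varepsilon$ about... actually of radius comparable to $2\varepsilon$), and the number of faces of each cell is bounded by $D'(n)$ by the standard packing argument in $\Hy^n$: only boundedly many $2\varepsilon$-separated points can lie within distance $4\varepsilon$ of a given one, and these are exactly the neighbors contributing faces.

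Next I would pass to the quotient. Since $P$ is $\Gamma$-invariant, the Voronoi complex descends to a cell complex on $\Orb$ whose cells are (folded) Voronoi cells; the number of top-dimensional cells is the number of $\Gamma$-orbits in $P$. To count these, observe that each orbit $\Gamma p_i$ meets a fundamental domain of volume $\Vol(\Orb)$ in at least one point, and the $2\varepsilon$-separation together with the bound $|F| \le q$ on stabilizers gives that each orbit contributes at least $\mathrm{const}(n) \cdot v_\varepsilon / q$ to the volume (a ball of radius $\varepsilon$ about $p_i$ projects injectively away from the singular locus except for the $\le q$-fold folding). Hence the number of orbits — and therefore of Voronoi cells downstairs — is at most $\mathrm{const}(n) \cdot q \Vol(\Orb)/v_\varepsilon$. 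Taking the barycentric subdivision of this polyhedral complex turns it into a simplicial complex, i.e. a triangulation $T$; since each Voronoi cell has at most $D'(n)$ faces and dimension $n$, each cell contributes at most a dimensional constant number of simplices, preserving the bound $C(n) q \Vol(\Orb)/v_\varepsilon$ on the simplex count. The vertex degree bound $D(n)$ follows because in a barycentric subdivision the link of any vertex is controlled by the face poset of the original complex, which has bounded combinatorics. Finally, geodesicity: each Voronoi cell is convex in $\Hy^n$ (intersection of half-spaces), so its barycentric subdivision simplices can be taken geodesic; the descent to $\Orb$ keeps the simplices geodesic since the covering is a local isometry, and the ``good triangulation'' property holds because the singular strata, being fixed-point sets of the finite subgroups, are unions of faces of the Voronoi complex by the choice of $P$ and hence subcomplexes of $T$ of the correct dimension.

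The main obstacle I anticipate is the interaction between the singular set and the Voronoi construction: ensuring simultaneously that (i) $P$ can be chosen $\Gamma$-invariant and $2\varepsilon$-separated, (ii) the $\ell$-dimensional singular strata land in the $\ell$-skeleton of the barycentric subdivision, and (iii) the stabilizer bound $q$ correctly enters the volume count. Near a point with a large finite stabilizer $F$, the orbit $\Gamma p_i$ is ``clustered,'' and one must verify that placing centers on the fixed-point locus (so that the Voronoi walls align with the singular strata) is compatible with the separation requirement — this is where the hypothesis $\varepsilon \le \mu_n/2$ is used, via the Margulis lemma, to control how complicated the local group can be and to guarantee that the fixed-point sets of distinct conjugates of $F$ are either equal or far apart at scale $\varepsilon$. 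Everything else (diameter bounds, face-number bounds, degree bounds, passing to barycentric subdivision) is a routine packing-and-combinatorics argument in constant negative curvature.
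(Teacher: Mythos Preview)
Your overall architecture (equivariant Voronoi complex plus barycentric subdivision, with packing for the degree bound and the Margulis lemma for the multiplicity-$q$ volume estimate) matches the paper's. The genuine gap is exactly the point you flag as the ``main obstacle'': how to force the $\ell$--strata of the singular set into the $\ell$--skeleton. Your proposed mechanism---placing the Voronoi centers \emph{on} the fixed-point loci so that the strata become faces---does not work as stated. If a center $p$ lies on an $\ell$--dimensional stratum $L$, then points of $L$ near $p$ are strictly closer to $p$ than to any other center and hence lie in the \emph{interior} of the top cell $V(p)$; after barycentric subdivision, $L$ still passes through the interior of top-dimensional simplices. Conversely, if you keep the centers off $L$ but otherwise unconstrained, there is no reason the Voronoi faces should align with $L$ beyond the codimension-one case. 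Your construction of the invariant separated set is also off: taking the $\Gamma$--orbit of an arbitrary maximal $2\varepsilon$--separated set in $\Hy^n$ need not be $2\varepsilon$--separated.

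The paper's resolution is the opposite of what you tried: choose the centers $S$ to be a maximal $2\varepsilon$--separated set in $\Orb$ consisting entirely of \emph{non-singular} points, and lift to $\overline{S}\subset\Hy^n$. Then the good-triangulation property is obtained not from the Voronoi complex itself but from a rigidity feature of its barycentric subdivision: the $d+1$ vertices of any $d$--simplex are barycenters of faces of $d+1$ \emph{distinct} dimensions, so any $g\in\Gamma$ sending the simplex to itself cannot permute its vertices nontrivially and must therefore fix the whole simplex pointwise. Hence if $\overline{x}$ lies in the open $d$--simplex, its stabilizer fixes a $d$--dimensional set, so the stratum dimension $\ell$ satisfies $\ell\ge d$. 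This trick also makes the degree bound cleaner: since each center is non-singular, the interior of each top Voronoi cell maps \emph{injectively} to $\Orb$, so the $(n-1)$--faces of a cell are identified at most in pairs downstairs, and the face count is controlled by a single packing estimate. Everything else in your sketch (the packing bound on neighboring cells, the use of $\varepsilon\le\mu_n/2$ and $2m_n(2\varepsilon)\le r$ to make $\Gamma_{2\varepsilon}(x)$ finite and hence of order $\le q$, and the resulting bound $|S|\le q\Vol(\Orb)/v_\varepsilon$) is correct and is exactly what the paper does.
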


\begin{proof}
Let $S$ be any maximal $2\varepsilon$--separated set of points in $\Orb$ that are not in the singular set, let $\overline{S}$ be the set of lifts of those points in $\mathbb{H}^n$, and let $P$ be the Voronoi decomposition of $\mathbb{H}^n$ corresponding to $\overline{S}$.  It is a cell decomposition with one top-dimensional cell for each point of $\overline{S}$, and this top-dimensional cell is equal to the convex hyperbolic polytope consisting of all points of $\mathbb{H}^n$ that are closer to our selected point than to any other point of $\overline{S}$.

We define a barycentric subdivision of $P$ as follows.  For any convex hyperbolic polytope, there is a unique point that minimizes the sum of squared distances to the vertices of the polytope; this is because squared distance to a point is a strictly convex function on $\mathbb{H}^n$ \cite[Theorem 4.1(2)]{Bishop69}.  We refer to this point as the \textbf{\textit{barycenter}} of the polytope.  The barycenter is in the relative interior of the polytope, because for every point, the negative gradient of the sum of squared distances to the vertices is a sum of vectors pointing toward the vertices.  Thus, we can form a triangulation $\overline{T}$ of $\mathbb{H}^n$, in which the vertices are the barycenters of all the faces of $P$ of all dimensions, and the simplices (all equal to the convex hulls of their vertices) correspond to chains of faces of $P$, under the partial ordering by inclusion of closures.  Because $P$ is $\Gamma$--invariant, so is $\overline{T}$, and so we can set $T$ to be the triangulation of $\Orb$ corresponding to $\overline{T}$.

First we check that $T$ is a good triangulation, that is, that for every dimension $\ell$, the $\ell$--stratum of the singular set of $\Orb$ is contained in the $\ell$--skeleton of $T$.  Let $x \in \Orb$ be any point, and let $d$ be the least dimension of any simplex of $T$ containing $x$.  Consider the stabilizer in $\Gamma$ of any lift $\overline{x}$ of $x$.  Any $g \in \Gamma$ that fixes $\overline{x}$ must send the whole $d$--simplex containing $\overline{x}$ to itself.  But the $d+1$ vertices of this simplex all come from different-dimensional faces of $P$, so $g$ cannot permute them in any way other than by the identity.  Thus the whole $d$--simplex is in the fixed-point set of the stabilizer of $\overline{x}$, and so if $x$ is in the $\ell$--stratum, then $\ell \geq d$.

Next we check that there is a bound on the vertex degree that depends only on the dimension $n$.  For $i = 0, 1, \ldots, n$, let $P_i$ be the set of vertices of $T$ that are the images in $\Orb$ of barycenters of $i$--dimensional faces of $P$.  First, for each vertex $v \in P_n$, let us bound the number of neighbors of $v$ in $P_{n-1}$.  This is equivalent to counting top-dimensional cells in $P$ that neighbor the cell of a lift of $v$.  Let $\overline{x}$ be the point of $\overline{S}$ that corresponds to a lift $\overline{v}$ of $v$, and let $\overline{y}_1, \ldots, \overline{y}_k$ be the points of $\overline{S}$ such that the cells of $\overline{y}_1, \ldots, \overline{y}_k$ share an $(n-1)$--dimensional face with the cell of $\overline{x}$.  Each $\overline{y}_i$ is within $4\varepsilon$ of $\overline{x}$.  Suppose first that the projections of $\overline{x}, \overline{y}_1, \ldots, \overline{y}_k$ to $\Orb$ are distinct.  In this case the $\varepsilon$--balls around $\overline{x}, \overline{y}_1, \ldots, \overline{y}_k$ are all disjoint.  The number of disjoint $\varepsilon$--balls that can fit within $4\varepsilon$ of a given point in $\mathbb{H}^n$ is monotonic in $\varepsilon$, so because we have assumed $\varepsilon \leq \frac{\mu_n}{8}$, we have a dimensional upper bound on $k$ in this case of disjoint projections.

The projections of $\overline{x}, \overline{y}_1, \ldots, \overline{y}_k$ to $\Orb$ may not all be distinct; that is, the cell of $v$ may be adjacent to itself one or more times, or may be adjacent to another cell multiple times.  We need to bound these multiplicities.  First we claim that $\varepsilon$ has been chosen such that if a $4\varepsilon$--ball in $\mathbb{H}^n$ contains several points of the same $\Gamma$--orbit, then there is a finite subgroup $H$ of $\Gamma$ such that these points are in the same $H$--orbit.  The constants $\mu_n$ and $m_n$ in the Margulis lemma have the following property.  For any $p \in \mathbb{H}^n$ and any $t \in \mathbb{R}$, let $\Gamma_t(p)$ denote the subgroup of $\Gamma$ generated by the elements that move $p$ by distance less than $t$.   Then if $t \leq \mu_n$ and if $\Gamma_t(p)$ is infinite, there is an element in $\Gamma$ of infinite order that moves $p$ by distance less than $2m_nt$ \cite[Lemma~2.3]{Samet13}.  We have chosen $\varepsilon$ such that $8\varepsilon \leq \mu_n$ and $2m_n(8\varepsilon) \leq r \leq r_{inj}(\Orb)$.  By definition, every element in $\Gamma$ of displacement less than $r_{inj}(\Orb)$ has a fixed point and therefore has finite order.  Thus, $\Gamma_{8\varepsilon}(p)$ must be finite for all $p \in \mathbb{H}^n$.  Let $p_1, \ldots, p_k$ be points in some $4\varepsilon$--ball in $\mathbb{H}^n$ that all map to the same point of $\Orb$.  Then they are all in the orbit of $p_1$ under $\Gamma_{8\varepsilon}(p_1)$, which we choose to be our finite subgroup $H$.

Suppose that one or more of the neighbors $\overline{y}_1, \ldots, \overline{y}_k$ of $\overline{x}$ project to the same point $x \in \Orb$ as $\overline{x}$ does.  From the previous paragraph we know that all such points are in the orbit of a finite subgroup $H$, and all elements of $H$ have a common fixed point.  Using the assumption that we are in dimension $n = 3$, we may apply Lemma~\ref{sublem-platonic} below to get a uniform bound on the number of neighbors $\overline{y}_1, \ldots, \overline{y}_k$ in the orbit of $\overline{x}$.  Similarly, suppose that one or more of $\overline{y}_1, \ldots, \overline{y}_k$ project to the same point $y_1 \in \Orb$ as $\overline{y}_1$ does, distinct from $x$.  These points are in the orbit of a finite subgroup $H$, and to bound how many of them may be neighbors of $\overline{x}$, we examine the Voronoi decomposition of $H\overline{x} \cup H\overline{y}_1$ and apply Lemma~\ref{sublem-platonic} to get a uniform bound.

We have a bound on how many $\overline{y}_1, \ldots, \overline{y}_k$ with distinct projections to $\Orb$ can be neighbors of $\overline{x}$, and in the case of dimension $n = 3$ we have a bound on the multiplicity with which they have the same projections as either $\overline{x}$ or each other.  In total this gives a bound $d_1(n)$ on the number of $P_{n-1}$--neighbors of each $v \in P_n$ in the case $n = 3$.

Then, we can use this bound to bound the total number of neighbors of each vertex $v \in P_n$.  Consider the cell in $P$ of a lift $\overline{v}$ of $v$.  The point $s$ in $S$ corresponding to this cell is not in the singular set of $\Orb$, and we claim that this implies that the interior of this cell maps injectively to $\Orb$.  Suppose to the contrary that some nontrivial element $g$ of $\Gamma$ takes this cell to itself.  Then it fixes the barycenter $\overline{v}$ of the cell but must move the lift $\overline{s}$ of $s$ because $s$ is not in the singular set, but this means that $\overline{v}$ is equidistant between $\overline{s}$ and $g\overline{s}$, contradicting the definition of the Voronoi decomposition because we know that $\overline{v}$ is in the interior of the cell.  Thus every top-dimensional cell in $P$ maps injectively to $\Orb$.  

This implies that when the closure of the cell of $\overline{v}$ in $P$ is mapped to $\Orb$, the $(n-1)$--dimensional faces are identified in at most pairs; no three $(n-1)$--dimensional faces can be identified, because the nearby parts of the interior of the cell do not get identified.  Thus, the total number of $(n-1)$--dimensional faces of the cell of $\overline{v}$ is at most $2d_1(n)$.  Every subset of $(n-1)$--dimensional faces intersects in at most one arbitrary-dimensional face of the cell of $\overline{v}$, so the total number of faces of the cell of $\overline{v}$ is at most $2^{2d_1(n)}$, and thus the total degree of $v$ is at most $2^{2d_1(n)}$.

Similarly, if instead we let $v$ be a vertex in any $P_i$, we can bound the number of adjacencies to vertices in $P_n$.  Counting with multiplicity is a little tricky here.  If $\overline{v}$ is a lift of $v$, and $\overline{y}_1$ and $\overline{y}_2$ are the points of $\overline{S}$ corresponding to cells that have $\overline{v}$ as a boundary point, then the segments from $\overline{v}$ to $\overline{y}_1$ and $\overline{y}_2$ give the same edge in $T$ if some element of $\Gamma$ takes $\overline{y}_1$ to $\overline{y}_2$ while fixing $\overline{v}$; otherwise, the two segments give two different edges in $T$.  Let $\overline{y}_1, \ldots, \overline{y}_k$ be the points of $\overline{S}$ corresponding to all of the cells that have $\overline{v}$ as a boundary point.  They are the closest points in $\overline{S}$ to the point $\overline{v}$, so they are within $2\varepsilon$ of $\overline{v}$.

We can bound the number of distinct projections of $\overline{y}_1, \ldots, \overline{y}_k$ to $\Orb$ because their $\varepsilon$--balls in $\mathbb{H}^n$ are disjoint and so we can take the minimum number of balls that fit when $\varepsilon = \frac{\mu_n}{8}$.  Next we need to bound the multiplicity with which $v$ may have different adjacencies to the same projection to $\Orb$; to do this, we use $n = 3$ and apply Lemma~\ref{sublem-platonic} to $H\overline{v} \cup H\overline{y}_1$, where $H$ is the finite subgroup of $\Gamma$ taking $\overline{y}_1$ to all other $\overline{y}_i$ that are in its $\Gamma$--orbit.  The case where $\overline{v}$ is fixed by $H$ does not give rise to different adjacencies, so Lemma~\ref{sublem-platonic} gives a bound on the number of different adjacencies from $v$ to any vertex in $P_n$.  Putting the bounds together, for $n =3$ we get a dimensional upper bound $d_2(n)$ on the number of $P_{n}$--neighbors of each $v \in P_i$.

We can bound the vertex degree of $T$ using the bound on the number of neighbors in $P_i$ of each element of $P_n$ and the bound on the number of neighbors in $P_n$ of each element of $P_i$.  Given any vertex $v \in P_i$, if $u$ is any neighbor of $v$, then $u$ and $v$ have a common neighbor $w \in P_n$.  Thus, the total number of neighbors of $v$ is at most $d_2(n) \cdot 2^{2d_1(n)}$, and so we set $D(n) = d_2(n) \cdot 2^{2d_1(n)}$.

Finally, we prove the bound on the number of top-dimensional simplices in $T$.  Each simplex has one vertex in $P_n$, and each vertex in $P_n$ is in at most $\binom{D(n)}{n}$ simplices, so the total number of simplices is at most $\binom{D(n)}{n}$ times the number of points in our original $2\varepsilon$--separated set $S$, and therefore it suffices to show that
\[\abs{S} \leq \frac{q \Vol(\Orb)}{v_\varepsilon}.\]
To show this, we claim that every $\varepsilon$--ball in $\mathbb{H}^n$ maps to $\Orb$ with multiplicity at most $q$ at each point.  This is because we have shown above that if $p_1, \ldots, p_k \in \mathbb{H}^n$ are in the same $4\varepsilon$--ball and also in the same $\Gamma$--orbit, they are also in the same $H$--orbit for some finite subgroup $H$ of $\Gamma$.  Because we have assumed that every finite subgroup of $\Gamma$ has at most $q$ elements, we must have $k \leq q$.

Thus, the $\varepsilon$--balls around the points of $S$ are disjoint in $\Orb$ and each has volume at least $\frac{v_\varepsilon}{q}$, where $v_{\varepsilon}$ denotes the volume of a ball of radius $\varepsilon$ in $\mathbb{H}^n$.  In total, the volume is at most $\Vol(\Orb)$, so we have $\abs{S} \leq \frac{q\Vol(\Orb)}{v_\varepsilon}$, and thus 
\[\#\mathrm{simplices}(T) \leq \binom{D(n)}{n} \cdot \frac{q\Vol(\Orb)}{v_\varepsilon} = C(n) \cdot \frac{q\Vol(\Orb)}{v_{\varepsilon}}.\]
\end{proof}

The proof above relies on the following additional lemma to bound the degree of the triangulation that arises from the Voronoi decomposition.  Although this lemma seems like it may be true more generally, we only know how to prove it in $3$ dimensions.  Proving this lemma is the only part of this paper where the assumption $n = 3$ is needed.

\begin{lemma}\label{sublem-platonic}
There is a constant $M$ such that the following is true.  Let $H$ be a finite group of rotations of $\mathbb{H}^3$ with a common fixed point, $*$.  Let $p$ and $q$ be points not fixed by any nontrivial elements of $H$.  Consider the Voronoi decomposition corresponding to the set $Hp \cup Hq$.  Then each $3$--dimensional cell has at most $M$ $2$--dimensional facets.
\end{lemma}

\begin{proof}
The finite groups of rotations of $S^2$ are classified: $H$ must be either a cyclic group, a dihedral group, or a group of rotations of a Platonic solid.  If $H$ is a group of rotations of a Platonic solid, we have a uniform bound on $\abs{H}$ and thus on the number of $3$--cells.  Two $3$--cells share at most one $2$--dimensional facet, so there are at most $2\abs{H} - 1$ facets per $3$--cell in this case.

Suppose that $H$ is cyclic, and that the common axis of the rotations is vertical.  We consider separately the Voronoi decomposition corresponding to $Hp$ and the Voronoi decomposition corresponding to $Hq$.  If two cells of the $(Hp \cup Hq)$--decomposition share a facet, then either they correspond to two elements of $Hp$ that have adjacent cells in the $Hp$--decomposition, or they correspond to two elements of $Hq$ that have adjacent cells in the $Hq$--decomposition, or they correspond to one element of $Hp$ and one element of $Hq$.

To see how many $Hp$--neighbors an element of $Hp$ can have, we observe that the $Hp$--decomposition looks like $\abs{H}$ congruent vertical wedges, so each wedge has two neighbors.  Similarly each element of $Hq$ has at most two $Hq$--neighbors.

Suppose that two cells are neighbors, one from an element of $Hp$ and the other from an element of $Hq$.  Without loss of generality, suppose that these elements are $p$ and $q$.  Then on the facet between the two cells, each point of the facet is closer to $p$ than to any other point of $Hp$, and it is closer to $q$ than to any other point of $Hq$.  Thus, in the $Hp$--decomposition, this point is in the cell of $p$, and in the $Hq$--decomposition, it is in the cell of $q$.  By examining the geometry of the congruent vertical wedges, we can see that the cell of $p$ in the $Hp$--decomposition intersects two cells of the $Hq$--decomposition, unless the $Hp$-- and $Hq$--decompositions are identical, in which case it intersects only one cell of the $Hq$--decomposition.  Thus, in the $(Hp \cup Hq)$--decomposition, the cell of $p$ can neighbor at most two cells of points in $Hq$.

In total, each cell of the $(Hp \cup Hq)$--decomposition can neighbor at most two cells of its own type and at most two cells of the other type, for a total of at most four cells in the case where $H$ is cyclic.

The argument when $H$ is dihedral is very similar.  Suppose that $H = D_{2k}$, and that in its cyclic subgroup $C_k$ the common axis of the rotations is vertical.  The unit sphere around the fixed point $*$ has a north pole and a south pole on this vertical axis.  Looking at the Voronoi decompositions with respect to $Hp$ and $Hq$ separately, we see that each has $k$ wedges touching the north pole, and $k$ wedges touching the south pole, rotated from each other by some offsets depending on $p$ and $q$.  The cell in the $Hp$--decomposition containing $p$ has at most four neighbors from $Hp$: two touching the same pole, and at most two touching the other pole.  It also intersects at most four cells in the $Hq$--decomposition: at most two touching the same pole, and at most two touching the other pole.  Thus, in the $(Hp \cup Hq)$--decomposition, each cell can neighbor at most eight cells in the case where $H$ is dihedral.
\end{proof}

This completes the proof of Lemma~\ref{lemma-Voronoi}.  We now bring in the arithmetic information for estimating the number of simplices in terms of volume. 

Given an integral monic polynomial $P(x)$ of degree $d$, its \textit{\textbf{Mahler measure}} is defined by 
\[ M(P) = \prod_{i=1}^d \mathrm{max}(1, |\theta_i|),\]
where $\theta_1$,\ldots, $\theta_d$ are the roots of $P(x)$.

Let $\gamma \in \Gamma$ be a hyperbolic transformation. By \cite[Proposition~1(1,4)]{Greenberg62}, the eigenvalues of $\gamma$ considered as an element of $\mathrm{O}(n,1)$ are $e^{\pm \ell(\gamma)}$ and $n-1$ eigenvalues whose absolute value is $1$. We would like to relate $e^{\ell(\gamma)}$ to the Mahler measure of a certain polynomial naturally associated to $\gamma$. To this end we can adapt the argument of \cite[Section~10]{Gelander04}. Let $H^\circ$ be the identity component of the group $H = \isom(\Hy^n)$. It is center-free and connected so we can identify it with its adjoint group $\mathrm{Ad}(H^\circ) \le \mathrm{GL}(\mathfrak{g})$, where $\mathfrak{g}$ denotes the Lie algebra of $H$. We have $\Gamma' = \Gamma\cap H^\circ$, a cocompact arithmetic lattice, and $\gamma^2 \in \Gamma'$. Since $\Gamma'$ is arithmetic, there is a compact extension $H^\circ\times K$ of $H^\circ$ and a $\Q$--rational structure on the Lie algebra $\mathfrak{g}\times\mathfrak{k}$ of $H^\circ\times K$, such that $\Gamma$ is the projection to $H^\circ$ of a lattice $\tilde{\Gamma}$, which is contained in $(H^\circ\times K)_\Q$ and commensurable to the group of integral points $(H^\circ \times K)_\Z$ with respect to some $\Q$--base of $(\mathfrak{g}\times\mathfrak{k})_\Q$. By changing this $\Q$--base we can assume that $\tilde{\Gamma}$ is contained in $(H^\circ \times K)_\Z$. This means that the characteristic polynomial $P_{\tilde{\gamma}}$ of any $\tilde{\gamma} \in \tilde{\Gamma}$ is a monic integral polynomial of degree $(n+1)\deg(k)$, where $k$ is the field of definition of the arithmetic group. Since $K$ is compact, any eigenvalue of $\tilde{\gamma}$ with absolute value different from $1$ is also an eigenvalue of its projection in $H^\circ$. Therefore, 
\begin{equation*}
e^{\ell(\gamma^2)} = M(P_{\tilde{\gamma}^2}); 
\end{equation*}

\begin{equation}\label{sec2:eq1}
\ell(\gamma) \ge \frac12 \log M(P_{\tilde{\gamma}^2}).
\end{equation}
This implies that $r_{inj}(\Orb) \ge \min\{\frac14 \log M(P_{\tilde{\gamma}}) \}$, where the minimum is taken over all $\tilde{\gamma} \in \tilde{\Gamma}$ which project to hyperbolic elements in $\Gamma'$. Moreover, our argument shows that the degrees of the irreducible integral monic polynomials whose Mahler measures appear in this bound satisfy
\begin{equation}\label{sec2:eq2}
d \le (n+1)\deg(k).
\end{equation}

Let us mention in passing that more precise versions of inequalities \eqref{sec2:eq1} and \eqref{sec2:eq2} for arithmetic subgroups of the simplest type were obtained in \cite{ERT16}. 

Now recall that the celebrated Lehmer's problem says that the Mahler measures of non-cyclotomic polynomials are expected to be uniformly bounded away from~$1$. A special case of this conjecture also known as the Margulis conjecture implies a uniform lower bound for the lengths of closed geodesics of arithmetic locally symmetric $n$--dimensional manifolds (see \cite[Section~10]{Gelander04}). These conjectures have attracted a lot of interest but still remain wide open. Nevertheless, there are some quantitative number-theoretic results towards Lehmer's problem which we can use for our estimates.

In \cite{Dobr79}, Dobrowolski proved the following lower bound for the Mahler measure:
\begin{equation}\label{sec2:eq3}
\log M(P) \ge c_1\left(\frac{\log\log d}{\log d}\right)^3,
\end{equation}
where $d$ is the degree of the polynomial $P$ and $c_1>0$ is an explicit constant.

We can relate the degree $d$ to the volume by using an important inequality relating the volume of a closed arithmetic orbifold and the degree of its field of definition:
\begin{equation}\label{sec2:eq4}
\mathrm{deg}(k) \le c_2\log\Vol(\Orb) + c_3.
\end{equation}
For hyperbolic orbifolds of dimension $n \ge 4$ this inequality follows from \cite[Section~3.3]{Bel07} and Minkowski's bound for discriminant. In dimensions $2$ and $3$ this inequality is a result of Chinburg and Friedman \cite{CF86}, and in the form stated here it can be found in \cite[Section~3]{BGLS10}. 

For sufficiently large $x$ the function $\frac{\log x}{x}$ is monotonically decreasing, hence for sufficiently large volume we obtain
\begin{equation}\label{sec2:eq5}
r_{inj}(\Orb) \ge \frac{c_1}4\left(\frac{\log\log\log \Vol(\Orb)^c}{\log\log \Vol(\Orb)^c}\right)^3.
\end{equation}

We note that this is a very slowly decreasing function. 

Next we need to bound the order $q$ of finite subgroups $F < \Gamma$ in terms of volume. This can be done using the Margulis lemma once again, this time applied to the discrete subgroups of $\mathrm{O}(n)$. The details for arithmetic subgroups of the simplest type can be found in \cite[Lemma~4.4 and Corollary~4.5]{ABSW08}. A similar argument applies in general: Consider a $k$--embedding of $\Gamma$ into $\mathrm{GL}(m, k)$ with $m = n+1$ if $n$ is even, $m = 2(n+1)$ if $n$ is odd and $\neq 7$ (cf. \cite[Proposition~6.4.8]{WitteMorris}), and $m = 24$ if $n =7$. The last embedding comes from the fact that an adjoint simple group of type $\mathrm{D}_4$ over $k$ is a connected component of the automorphism group of a trialetarian algebra \cite[Chapter~X]{Book_of_involutions}. Starting from this place we can repeat the proof of the lemma and the corollary cited above. The resulting inequality is 
\begin{equation}
 q \le c_4\deg(k)^{c_5}, 
\end{equation}
with the constants $c_4$, $c_5 > 0$ depending only on $n$. 

Together with \eqref{sec2:eq4} it implies 
\begin{equation}\label{sec2:eq6}
 q \le c_6(\log\Vol(\Orb))^{c_5}.
\end{equation}

It remains to apply inequalities \eqref{sec2:eq5} and \eqref{sec2:eq6} for estimating the number of simplices in a good triangulation provided by Lemma~\ref{lemma-Voronoi}. For sufficiently large volume, we have
\begin{align*}
&r_{inj}(\Orb) \geq \frac{c_1}{4}\left(\frac{\log\log\log \Vol(\Orb)^c}{\log\log \Vol(\Orb)^c}\right)^3 \geq\\
&\phantom{r_{inj}(\Orb)} \geq \frac{c_1}{4} \left(\frac{1}{\log\log \Vol(\Orb)^c}\right)^3 \geq \\
&\phantom{r_{inj}(\Orb)} \geq \frac{c_1}{4}\left(\frac{1}{\log \Vol(\Orb)}\right)^3,
\end{align*}
so for $\varepsilon = \min\left\{\frac{\mu_n}{8}, \frac{r}{16m_n}\right\}$ we have
\[v_{\varepsilon} \geq C(n) \cdot \varepsilon^n \geq C(n) \cdot \left(\frac{1}{\log\Vol(\Orb)}\right)^{3n}.\]
Thus for sufficiently large volume we obtain
\begin{align*}
&\#\mathrm{\ simplices}(T) \leq C(n) \cdot q \cdot \Vol(\Orb) \cdot \frac{1}{v_{\varepsilon}} \leq \\
&\phantom{\#\mathrm{\ simplices}} \leq C(n) \cdot (\log \Vol(\Orb))^{c_5} \cdot \Vol(\Orb) \cdot (\log \Vol(\Orb))^{3n} \leq \\
&\phantom{\#\mathrm{\ simplices}} \leq \Vol(\Orb)^{1+\delta}.
\end{align*}
This finishes the proof of the theorem. \qed

\begin{rem} Let us note that even assuming the solution to Lehmer's problem or the Margulis conjecture our method would not allow us to deduce a better bound for the size of a good triangulation. This is because of the contribution of the singularities of large order to the volume estimates. A more careful analysis under this assumption may allow one to produce a linear upper bound but we will not pursue it here. 
\end{rem}

\bibliographystyle{amsalpha}
\bibliography{orb-exp-bib}{}
\end{document}